\newtheorem{theorem}{Theorem}
\theoremstyle{plain}
\newtheorem{corollary}[theorem]{Corollary}
\newtheorem{lemma}[theorem]{Lemma}
\newtheorem{proposition}[theorem]{Proposition}
\numberwithin{equation}{section}
\numberwithin{theorem}{section}
\numberwithin{case}{section}
\numberwithin{subcase}{case}
\def \a{\alpha}
\def \e{\epsilon}
\def \r{\gamma}
\begin{document}
\title{Near Perfect Matchings in $k$-uniform Hypergraphs}
\author{Jie Han}
%\address
%{Department of Mathematics and Statistics,\newline
%\indent Georgia State University, Atlanta, GA 30303}
\thanks{Corresponding author: Jie Han, Georgia State University, Atlanta, GA, USA. Email: jhan22@gsu.edu.}
\date{\today}
%\subjclass{Primary 05C70, 05C65} %
%\keywords{perfect matching, hypergraph, absorbing method}%

\maketitle

\begin{abstract}
Let $H$ be a $k$-uniform hypergraph on $n$ vertices where $n$ is a sufficiently large integer not divisible by $k$. We prove that if the minimum $(k-1)$-degree of $H$ is at least $\lfloor n/k \rfloor$, then $H$ contains a matching with $\lfloor n/k\rfloor$ edges. This confirms a conjecture of R\"odl, Ruci\'nski and Szemer\'edi \cite{RRS09}, who proved that minimum $(k-1)$-degree $n/k+O(\log n)$ suffices. More generally, we show that $H$ contains a matching of size $d$ if its minimum codegree is $d<n/k$, which is also best possible.
\end{abstract}

\section{Introduction}

Given $k\ge 2$, a $k$-uniform hypergraph (in short, \emph{$k$-graph}) consists of a vertex set $V(H)$ and an edge set $E(H)\subseteq \binom{V(H)}{k}$, where every edge is a $k$-element subset of $V(H)$. A \emph{matching} in $H$ is a collection of vertex-disjoint edges of $H$. A \emph{perfect matching} $M$ in $H$ is a matching that covers all vertices of $H$. Clearly a perfect matching in $H$ exists only if $k$ divides $|V(H)|$. When $k$ does not divide $n=|V(H)|$, we call a matching $M$ in $H$ a \emph{near perfect matching} if $|M|=\lfloor n/k \rfloor$.

Given a $k$-graph $H$ with a set $S$ of $d$ vertices (where $1 \le d \le k-1$) we define $\deg_{H} (S)$ to be the number of edges containing $S$ (the subscript $H$ is omitted if it is clear from the context). The \emph{minimum $d$-degree $\delta _{d} (H)$} of $H$ is the minimum of $\deg_{H} (S)$ over all $d$-vertex sets $S$ in $H$.  
We refer to $\delta _{k-1} (H)$ as the \emph{minimum codegree} of $H$.

Over the last few years there has been a strong focus in establishing minimum $d$-degree thresholds that force a perfect matching in a $k$-graph \cite{AFHRRS, CzKa, HPS, Khan2, Khan1, KO06mat, KOT, MaRu, Pik, RR, RRS06mat, RRS09, TrZh13}. In particular, R\"odl, Ruci\'nski and Szemer\'edi \cite{RRS09} determined the minimum codegree threshold that ensures a perfect matching in a $k$-graph on $n$ vertices for all $k\ge 3$ and sufficiently large $n\in k\mathbb{N}$. The threshold is $\frac{n}{2}-k+C$, where $C\in\{3/2, 2, 5/2, 3\}$ depends on the values of $n$ and $k$. 
In contrast, they proved that the minimum codegree threshold that ensures a near perfect matching in a $k$-graph on $n\notin k\mathbb{N}$ vertices is between $\lfloor \frac{n}{k}\rfloor$ and $\frac{n}{k}+O(\log n)$. It is conjectured, in \cite{RRS09} and \cite[Problem 3.3]{RR}, that this threshold is $\lfloor \frac{n}{k}\rfloor$. In this note we verify this conjecture.

\begin{theorem}\label{thm:main}
For any integer $k\ge 3$, let $n$ be a sufficiently large integer which is not divisible by $k$. Suppose $H$ is a $k$-uniform hypergraph on $n$ vertices with $\delta_{k-1}(H)\ge \lfloor\frac{n}{k}\rfloor$. Then $H$ contains a matching of size $\lfloor\frac{n}{k}\rfloor$.
\end{theorem}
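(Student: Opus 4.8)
Write $n=kq+r$ with $q=\lfloor n/k\rfloor$ and $1\le r\le k-1$; the goal is a matching of size $q$, that is, one missing exactly $r$ vertices. The plan is to run the absorption method in tandem with a stability dichotomy. Fix constants $0<\beta\ll\alpha\ll\epsilon\ll 1/k$ and call $H$ \emph{$\epsilon$-extremal} if there is a set $A\subseteq V(H)$ with $|A|\le(1/k+\epsilon)n$ such that $V(H)\setminus A$ spans at most $\epsilon n^{k}$ edges — so $A$ is an almost-cover of size close to $n/k$, mirroring the extremal construction in which every edge meets a fixed set of size $q-1$. The argument then splits according to whether $H$ is $\epsilon$-extremal.

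In the non-extremal case two lemmas do the work. \textbf{Absorbing lemma:} if $H$ is not $\epsilon$-extremal then there is a matching $M_{0}$ with $|M_{0}|\le\beta n$ such that for every $W\subseteq V(H)\setminus V(M_{0})$ with $|W|\le\beta^{2}n$ and $k\mid|W|$, the hypergraph $H[V(M_{0})\cup W]$ has a perfect matching. I would prove this by showing that every $k$-set $W$ has $\Omega(n^{2k})$ ``absorbers'' — disjoint edge pairs $(e_{1},e_{2})$ with $e_{1}\cup e_{2}\cup W$ splitting into three edges of $H$ — the point being that non-extremality is exactly what forces the edges of $H$ to be spread out enough (in the extremal construction a $W$ disjoint from $A$ has far too few absorbers); a standard random sub-family plus deletion argument then yields $M_{0}$. \textbf{Almost-perfect-matching lemma:} if $H$ is not $\epsilon$-extremal then $H$ has a matching missing at most $\beta^{2}n$ vertices. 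Granting these, the theorem follows: apply the second lemma to $H_{1}:=H-V(M_{0})$ (still non-extremal, with $\delta_{k-1}(H_{1})\ge q-k|M_{0}|$) to obtain a matching $M'$ missing a set $L$ with $|L|\le\beta^{2}n$ and $|L|\equiv r\pmod k$; set aside $r$ vertices $R\subseteq L$; absorb $L\setminus R$ using $M_{0}$; then $M'$ together with the perfect matching of $H[V(M_{0})\cup(L\setminus R)]$ covers $V(H)\setminus R$, hence has size $(n-r)/k=q$.

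The almost-perfect-matching lemma I would prove by a maximum-matching/augmentation argument. Let $M$ be a maximum matching; if $|M|\ge q-\beta^{2}n$ we are done, so suppose $|M|<q-\beta^{2}n$ and set $U=V(H)\setminus V(M)$, an independent set with $|U|\ge r+k\beta^{2}n$. For every $(k-1)$-set $S\subseteq U$ all $\deg(S)\ge q\ge|M|$ neighbours of $S$ lie in $V(M)$; a careful double count of the incidences between $(k-1)$-subsets of $U$ and vertices of $V(M)$ should show that either there are two disjoint $(k-1)$-sets $S_{1},S_{2}\subseteq U$ and two distinct vertices $v,w$ of a common edge $e\in M$ with $S_{1}\cup\{v\}$ and $S_{2}\cup\{w\}$ both edges — replacing $e$ by these two edges enlarges $M$, a contradiction — or else these neighbourhoods concentrate on a set $A$ of size $\le(1/k+\epsilon)n$ off which there are few edges, i.e.\ $H$ is $\epsilon$-extremal, again a contradiction.

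In the extremal case we have $A$ with $|A|\le(1/k+\epsilon)n$ and at most $\epsilon n^{k}$ edges inside $B:=V(H)\setminus A$. One first argues $|A|\ge q$: otherwise most $(k-1)$-sets $S\subseteq B$ have all their edges meeting $A$, forcing $\deg(S)\le|A|<q$ and contradicting $\delta_{k-1}(H)\ge q$. Since then, for almost every $(k-1)$-set $S\subseteq B$, the link of $S$ contains almost all of $A$, a Hall/greedy argument matches $q$ vertices of $A$, one at a time, to pairwise-disjoint $(k-1)$-subsets of $B$ — at each step the vertex being matched still has many admissible, so-far-unused $(k-1)$-sets — producing a matching of size $q$. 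I expect the main obstacles to be the almost-perfect-matching lemma in the non-extremal case (extracting a genuinely usable extremal structure, with matching parameters, from ``$\delta_{k-1}(H)\ge\lfloor n/k\rfloor$ and no augmentation'') and the exact-count bookkeeping in the extremal case — in particular making the $\epsilon$-extremal threshold line up so that the dichotomy is exhaustive.
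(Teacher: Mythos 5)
Your overall architecture (extremal/non-extremal dichotomy, absorption plus an almost-perfect matching lemma, augmenting-path argument for the latter) matches the paper's, but two of your key steps have genuine gaps. The most serious is your absorbing lemma. You posit a matching $M_0$ that can perfectly absorb any leftover set $W$ with $k\mid |W|$, based on the claim that non-extremality forces $\Omega(n^{2k})$ standard absorbers (disjoint pairs $e_1,e_2$ with $e_1\cup e_2\cup W$ repartitioning into three edges) for every $k$-set $W$. At codegree only about $n/k$ this is exactly the step you cannot wave through: building such an absorber requires linking the neighborhood of one part of $W$ to the link of another vertex of $W$, and single-vertex links are not controlled by the codegree hypothesis, so the usual counting that works at codegree $(1/2+\gamma)n$ breaks down; ``non-extremality makes edges spread out'' is an assertion, not an argument, and it is not clear it is even true in the form you need. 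This is precisely why the paper (following R\"odl--Ruci\'nski--Szemer\'edi) uses a weaker, lossy absorber: an edge $e$ absorbs a $(k+1)$-set $S$ by being replaced with two edges $e_1,e_2$ that cover all of $S$ but release one vertex of $e$. Those absorbers exist in abundance under the codegree condition alone (no non-extremality needed), and the loss of one vertex per absorption is harmless because the target is a near perfect matching. Correspondingly, the paper's Lemma \ref{lem:alm_mat} is pushed to give a \emph{constant} leftover ($\le k^2/\gamma$ vertices), so that a constant number of lossy absorptions finishes the job; your version, with leftover $\beta^2 n$, shifts all the burden onto the unproven strong absorbing lemma.

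The extremal case is also underpowered. First, $|A|\ge q$ does not follow as you state it: with your ``sparse'' (rather than independent) complement, a $(k-1)$-set $S\subseteq B$ may still have some neighbors in $B$, so you only get $|A|\ge q - o(n)$, and the whole point of the exact theorem is that errors of this kind must be eliminated. Second, nothing in your definition of $A$ prevents a vertex $a\in A$ from having essentially no edges with all other $k-1$ vertices in $B$, in which case it cannot be matched into $B$ at all; the paper avoids this by taking $C$ to be a maximum independent set, defining $A$ by a degree-into-$C$ threshold \eqref{eq:A}, and covering the small exceptional set $B$ first with the matchings $M_1,M_2$. Third, the greedy ``each vertex still has many admissible unused $(k-1)$-sets'' argument collapses in the endgame: in the last $\Theta(n)$ steps the pool of unused $(k-1)$-sets is small and single-vertex degrees into it are not controlled by $\delta_{k-1}$. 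The paper handles exactly this by the divisibility bookkeeping of Step 2 (the matching $M_3$ and the residue $r$) and by finishing with Pikhurko's perfect matching theorem for $k$-partite $k$-graphs (Theorem \ref{thm:pik}) on $H[A_3',C^1,\dots,C^{k-1}]$, which is a genuine extremal theorem and not a greedy/Hall argument. Your sketch of the almost-perfect matching lemma is in the right spirit (it parallels the paper's proof of Lemma \ref{lem:alm_mat}), but the two gaps above are where the actual difficulty of the theorem lies.
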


It is also natural to ask for the minimum codegree threshold for the \emph{matching number} of $k$-graphs, namely, the size of a maximum matching. The following theorem \cite[Fact 2.1]{RRS09} is obtained by a greedy algorithm. Let $\nu(H)$ be the size of a maximum matching in $H$.
\begin{theorem}\cite{RRS09}\label{fact:RRS09}
Let $n\ge k\ge 2$. For every $k$-uniform hypergraph $H$ on $n$ vertices,
\[
\nu(H)\ge  \delta_{k-1}(H) \text{ if } \delta_{k-1}(H) \le \left\lfloor \frac{n}{k} \right\rfloor -k+2.
\]
\end{theorem}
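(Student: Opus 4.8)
The plan is to argue by contradiction, starting from a maximum matching and using the codegree condition to reroute it into a strictly larger one. So suppose $d:=\delta_{k-1}(H)\le\lfloor n/k\rfloor-k+2$ but $\nu(H)=m<d$, and fix a maximum matching $M$ of $H$, so $|M|=m$; let $U=V(H)\setminus V(M)$ be the set of uncovered vertices. The first step is to estimate $|U|$: since $m\le d-1\le\lfloor n/k\rfloor-k+1$ and $k\lfloor n/k\rfloor\le n$, we get $km\le n-k^{2}+k$, hence $|U|=n-km\ge k(k-1)$. This is the only place the precise bound $\lfloor n/k\rfloor-k+2$ enters, and it is exactly the room needed to choose $k$ pairwise disjoint $(k-1)$-element subsets $S_{1},\dots,S_{k}$ of $U$, which I would do next.

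The second step is to record the effect of maximality. For each $i$ put $N_{i}=\{v\in V(H):S_{i}\cup\{v\}\in E(H)\}$, so that $|N_{i}|=\deg_{H}(S_{i})\ge d$. If some $v\in N_{i}$ lay in $U$, then $S_{i}\cup\{v\}$ would be an edge disjoint from $M$ and $M\cup\{S_{i}\cup\{v\}\}$ a larger matching; hence $N_{i}\subseteq V(M)$ for every $i$.

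The third step is a short double count. Because each $N_{i}\subseteq V(M)$ we have $\sum_{i=1}^{k}|N_{i}|=\sum_{e\in M}\sum_{i=1}^{k}|N_{i}\cap e|$, while $\sum_{i=1}^{k}|N_{i}|\ge kd>km=|V(M)|=\sum_{e\in M}|e|$; averaging over the edges of $M$ therefore produces an edge $e^*\in M$ with $\sum_{i=1}^{k}|N_{i}\cap e^*|>k=|e^*|$. The crucial point is that this excess forces two ``independent'' one-vertex extensions into $e^*$: there are indices $i\ne j$ and \emph{distinct} vertices $a\in N_{i}\cap e^*$, $b\in N_{j}\cap e^*$. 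Indeed, if no such pair existed, then any two nonempty sets $N_{i}\cap e^*$, $N_{j}\cap e^*$ with $i\ne j$ would have to coincide with one common singleton, so $\sum_{i}|N_{i}\cap e^*|$ would be at most the number of $i$ with $N_{i}\cap e^*\ne\emptyset$, hence at most $k$ --- a contradiction. Given such $i,j,a,b$, the sets $S_{i}\cup\{a\}$ and $S_{j}\cup\{b\}$ are edges of $H$; they are disjoint from each other (since $S_{i}\cap S_{j}=\emptyset$, $a\ne b$, and $a,b\in V(M)$ while $S_{i},S_{j}\subseteq U$) and from every other edge of $M$ (again as $S_{i},S_{j}\subseteq U$ and $a,b\in e^*$). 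Hence $(M\setminus\{e^*\})\cup\{S_{i}\cup\{a\},\,S_{j}\cup\{b\}\}$ is a matching of size $m+1$, contradicting the maximality of $M$ and proving that $\nu(H)\ge\delta_{k-1}(H)$ whenever $\delta_{k-1}(H)\le\lfloor n/k\rfloor-k+2$.

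I expect the main obstacle to be the third step: turning the blunt global inequality $\sum_{i}|N_{i}|>|V(M)|$ into a usable \emph{local} configuration on a single edge $e^*$, and recognising that the right object is precisely a pair of extensions coming from two different sets $S_{i}$ and meeting $e^*$ in two different vertices --- that is exactly what lets both new edges be inserted after deleting only $e^*$. The arithmetic giving $|U|\ge k(k-1)$ and the disjointness checks are routine bookkeeping.
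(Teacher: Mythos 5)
Your argument is correct and is essentially the greedy/augmentation proof that the paper attributes to \cite{RRS09}: the paper itself only cites Fact~2.1 there, and that proof likewise takes a maximum matching $M$, uses $\delta_{k-1}(H)\le\lfloor n/k\rfloor-k+2$ to place $k$ disjoint $(k-1)$-sets in the uncovered vertices, and swaps one edge of $M$ for two new edges found via the codegree count. One tiny slip in your third step: when exactly one set $N_i$ meets $e^*$, the intermediate bound ``$\sum_i|N_i\cap e^*|$ is at most the number of indices with $N_i\cap e^*\ne\emptyset$'' does not apply, but the needed conclusion $\sum_i|N_i\cap e^*|\le k$ still holds trivially since $|N_i\cap e^*|\le|e^*|=k$, so the contradiction with $\sum_i|N_i\cap e^*|\ge k+1$ goes through unchanged.
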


Note that for $n\in k\mathbb{N}$ and $\frac{n}{k}\le \delta_{k-1}(H) \le \frac{n}{2}-k$, $H$ may not contain a perfect matching, namely, a matching of size $\frac{n}{k}$ (see \cite{RRS09}). So the only open cases are when $\left\lfloor \frac{n}{k} \right\rfloor -k+3\le \delta_{k-1}(H)< \frac{n}{k}$. In this note, we close this gap for large $n$.
\begin{corollary}\label{thm:mat_num}
For any integer $k\ge 3$, let $n$ be a sufficiently large integer. For every $k$-uniform hypergraph $H$ on $n$ vertices,
\[
\nu(H)\ge  \delta_{k-1}(H) \text{ if } \delta_{k-1}(H)< \frac{n}{k}.
\]
\end{corollary}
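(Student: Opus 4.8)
The plan is to obtain Corollary \ref{thm:mat_num} from Theorem \ref{thm:main} by a short padding argument, using Theorem \ref{fact:RRS09} to clear the low-codegree range at no cost. Write $d:=\delta_{k-1}(H)$ and assume $d<n/k$, so that $n-kd\ge 1$. If $d\le\lfloor n/k\rfloor-k+2$, then Theorem \ref{fact:RRS09} already yields $\nu(H)\ge d$; hence we may assume $d\ge\lfloor n/k\rfloor-k+3$, in which case $1\le n-kd\le k^2$ and $0\le\lfloor n/k\rfloor-d\le k-3$. So $H$ misses the hypothesis of Theorem \ref{thm:main} only by a bounded amount, and the idea is to adjoin a bounded number of new vertices, together with all $k$-sets through them, landing on a $k$-graph that Theorem \ref{thm:main} can handle.

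Precisely, I would let $r\in\{1,\dots,k-1\}$ be the residue of $n-kd$ modulo $k-1$, set $m:=(n-kd-r)/(k-1)$ (a nonnegative integer, bounded in terms of $k$ in the present range), and build a $k$-graph $\tilde H$ on $V(H)\cup X$ with $|X|=m$ by keeping $E(H)$ and adding \emph{every} $k$-subset of $V(H)\cup X$ meeting $X$; put $\tilde n:=n+m$. Then $\tilde n=k(d+m)+r$ with $1\le r\le k-1$, so $k\nmid\tilde n$ and $\lfloor\tilde n/k\rfloor=d+m$, and $\tilde n\ge n$ is still large enough for Theorem \ref{thm:main}. For the codegree check, any $(k-1)$-set $S$ that meets $X$ satisfies $\deg_{\tilde H}(S)=\tilde n-k+1\ge d+m$ (here $d<n/k\le n-k+1$ is used), while any $(k-1)$-set $S\subseteq V(H)$ satisfies $\deg_{\tilde H}(S)=\deg_H(S)+m\ge d+m$, the $m$ new edges being $S\cup\{x\}$ for $x\in X$. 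Thus $\delta_{k-1}(\tilde H)\ge d+m=\lfloor\tilde n/k\rfloor$.

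Applying Theorem \ref{thm:main} to $\tilde H$ gives a matching $M$ of size $d+m$; since its edges are pairwise disjoint and $|X|=m$, at most $m$ of them meet $X$, so at least $d$ of them are contained in $V(H)$, and these are genuine edges of $H$ because the only edges of $\tilde H$ inside $V(H)$ are the edges of $H$. Hence $\nu(H)\ge d=\delta_{k-1}(H)$, as desired. I do not expect a real obstacle: the mathematical content is entirely inside Theorem \ref{thm:main}, and the only thing requiring care is the elementary arithmetic guaranteeing that $m$ is a nonnegative integer and that the padding forces $\lfloor\tilde n/k\rfloor=d+m$ with $k\nmid\tilde n$. (One could in fact dispense with Theorem \ref{fact:RRS09} and run the same argument for all $d<n/k$, at the cost of letting $m$ grow to about $n/(k-1)$, which is harmless since Theorem \ref{thm:main} only requires $\tilde n$ to be large.)
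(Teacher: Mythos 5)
Your proposal is correct and follows essentially the same route as the paper: dispose of the range $\delta_{k-1}(H)\le\lfloor n/k\rfloor-k+2$ via Theorem \ref{fact:RRS09}, then pad $H$ with a bounded number of auxiliary vertices joined to all $k$-sets, apply Theorem \ref{thm:main} to the padded $k$-graph, and discard the few matching edges meeting the new vertices. The only difference is cosmetic: your single choice of $m$ via the residue of $n-kd$ modulo $k-1$ replaces the paper's two-case analysis (adding $c$ or $c+1$ vertices), and your closing remark that the padding alone could also cover the low-codegree range is a valid minor simplification.
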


\begin{proof}
Let $\delta_{k-1}(H)=\left\lfloor \frac{n}{k} \right\rfloor -c$. We only prove Corollary \ref{thm:mat_num} in the cases when $1\le c\le k-3$, since Theorem \ref{fact:RRS09} covers the cases when $c\ge k-2$ and Theorem \ref{thm:main} covers the case when $\delta_{k-1}(H)=\left\lfloor \frac{n}{k} \right\rfloor<\frac{n}{k}$. 
Let $r\equiv n \mod k$ such that $0\le r\le k-1$. Note that $\left\lfloor \frac{n}{k} \right\rfloor = \left\lfloor \frac{n+c}{k} \right\rfloor$ if $r+c<k$ and $\left\lfloor \frac{n}{k} \right\rfloor+1 = \left\lfloor \frac{n+c+1}{k} \right\rfloor$ otherwise.
For the first case, we add $c$ vertices to $H$ and get $H'$ such that $H'$ contains all edges of $H$ and all $k$-sets containing any of these new vertices. Note that $H'$ has $n+c$ vertices and $\delta_{k-1}(H')=\left\lfloor \frac{n+c}{k} \right\rfloor$. Moreover, $k$ does not divide $n+c$ since $1\le r+c<k$. We apply Theorem \ref{thm:main} on $H'$ and get a near perfect matching $M$ of $H'$. Deleting up to $c$ edges from $M$ that contain the new vertices, we get a matching in $H$ of size $\left\lfloor \frac{n}{k} \right\rfloor -c$.

In the second case, we add $c+1$ vertices to $H$ and get $H'$ such that $H'$ contains all edges of $H$ and all $k$-sets containing any of these new vertices. Note that $H'$ has $n+c+1$ vertices and $\delta_{k-1}(H')=\left\lfloor \frac{n}{k} \right\rfloor+1=\left\lfloor \frac{n+c+1}{k} \right\rfloor$. Moreover, $k$ does not divide $n+c+1$ since $k+1\le r+c+1\le 2k-3$. Similarly we apply Theorem \ref{thm:main} on $H'$ and get a near perfect matching $M$ of $H'$. Deleting up to $c+1$ edges from $M$ that contain the new vertices, we get a matching in $H$ of size $\left\lfloor \frac{n}{k} \right\rfloor+1 - (c+1) =\left\lfloor \frac{n}{k} \right\rfloor -c$.
\end{proof}

It is easy to see that Theorem \ref{thm:main} and Corollary \ref{thm:mat_num} are best possible. For an integer $0\le d<\frac{n}{k}$, let $H$ be a $k$-graph with a partition $A\cup B$ of the vertex set $V(H)$ such that $|A|=d$ and $E(H)$ consists of all $k$-tuples that intersect $A$. Since every edge intersects $A$, we have $\nu(H)=\delta_{k-1}(H)=|A|=d$.

Let us describe this interesting phenomenon by the following dynamic process. Consider a $k$-graph $H$ on $n$ vertices with $E(H)=\emptyset$ at the beginning and add edges to $E(H)$ gradually. Corollary \ref{thm:mat_num} says $\nu(H)\ge \delta_{k-1}(H)$ when $\delta_{k-1}(H) < \frac{n}{k}$. In order to guarantee a perfect matching, $\delta_{k-1}(H)$ needs to be about $n/2$ \cite{RRS09}.

\medskip
As a typical approach to obtain exact results, our proof of  Theorem~\ref{thm:main} consists of an \emph{extremal case} and a \emph{nonextremal case}.
We say that $H$ is \emph{$\r$-extremal} if $V(H)$ contains an independent subset $B$ of order at least $(1-\r)\frac{k-1}k n$.

\begin{theorem}[Nonextremal case]\label{thm:next}
For any integer $k\ge 3$ and constant $\gamma>0$, there is an integer $n_0$ such that the following holds. Let $n\ge n_0$ be an integer not divisible by $k$ and let $H$ be an $n$-vertex $k$-graph with $\delta_{k-1}(H)\ge \frac{n}{k} - \r n$. If $H$ is not $5k\r$-extremal, then $H$ contains a near perfect matching.
\end{theorem}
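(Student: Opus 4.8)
The plan is to use the **absorption method**, which is by now standard for near-perfect and perfect matching problems in hypergraphs. The strategy is to first find a small "absorbing" matching $M_0$ with a flexibility property, then cover almost all of the remaining vertices by a second matching obtained from a fractional/lattice argument or a greedy extension, and finally absorb the leftover vertices into $M_0$. Concretely, I would proceed as follows.

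First I would establish an **Absorbing Lemma**: there is a constant $c=c(k,\gamma)>0$ and a matching $M_0$ in $H$ with $|V(M_0)|\le \gamma n$ (say) such that for every set $W\subseteq V(H)\setminus V(M_0)$ with $|W|\le c n$ and $k \mid |W|$, the induced subgraph $H[V(M_0)\cup W]$ contains a perfect matching. The usual route is to show that for most $k$-sets $T$ there are $\Omega(n^{k})$ "absorbers" — here a $k$-set $S_T$ of new vertices such that both $S_T$ and $S_T\cup T$ (suitably arranged into edges) lie in $H$ — and then take a random sublinear subfamily and clean it up. The non-extremality hypothesis will be used precisely to guarantee that enough absorbers exist: if $H$ is not $5k\gamma$-extremal, then $H$ cannot behave like the extremal example (almost all edges through a tiny set), so the codegree condition $\delta_{k-1}(H)\ge n/k-\gamma n$ spreads edges around enough to produce $\Omega(n^{k})$ absorbing configurations for each $k$-set. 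I expect this to be the **main obstacle**, since near-perfect matchings give much less room than perfect-matching problems where $\delta_{k-1}\ge n/2$ is available; one must extract absorbers from a codegree that is only $n/k-o(n)$, which is genuinely tight, so the counting argument has to exploit non-extremality carefully rather than brute degree bounds.

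Next, after removing $V(M_0)$, I would work in $H':=H[V(H)\setminus V(M_0)]$ on $n':=n-|V(M_0)|$ vertices with $\delta_{k-1}(H')\ge n/k-2\gamma n$, and show that $H'$ has a matching covering all but at most $cn'$ vertices. The cleanest way is via the **lattice/fractional-matching** approach: one shows that a $k$-graph with no near-perfect matching and with minimum codegree $\ge n/k-o(n)$ that is "spread" (non-extremal) admits an almost-perfect fractional matching, then applies a known conversion (à la R\"odl--Ruci\'nski--Szemer\'edi, using the fact that a fractional perfect matching plus minimum-degree conditions yields an integer matching missing only $o(n)$ vertices). Alternatively, a direct greedy argument: as long as more than $cn'$ vertices remain uncovered, the codegree condition still forces an edge inside the uncovered part, since an uncovered set of size $>cn'$ together with $\delta_{k-1}\ge n/k-2\gamma n$ cannot be an independent set unless $H$ were $5k\gamma$-extremal. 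This again leans on non-extremality to keep the greedy process going past the point where the trivial bound fails.

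Finally I would combine the pieces: let $M'$ be the almost-perfect matching of $H'$ and let $W$ be the set of vertices left uncovered by $M_0\cup M'$. Then $|W|\le cn$ and, crucially, $|W|\equiv n \not\equiv 0 \pmod k$ — but since $n$ is not divisible by $k$, "near perfect" means leaving exactly $r$ vertices uncovered where $r\equiv n\pmod k$; so I would arrange the counting so that $|W|-r$ of the leftover vertices (a multiple of $k$) get absorbed into $M_0$ by the Absorbing Lemma, yielding a matching of $H$ of size $(n-r)/k=\lfloor n/k\rfloor$. A small technical point: the absorbing step requires $k\mid (|W|-r)$ and $|W|-r\le cn$, both arranged by choosing the constants $c\ll c(\text{absorb})$ and by noting $|V(M_0)|+k|M'|+|W|=n$. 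I would fix the hierarchy $1/n_0 \ll c \ll \gamma$ at the outset and verify each step consumes only the slack it is allotted.
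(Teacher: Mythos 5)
Your plan rests on a standard ``exact'' absorbing lemma: a matching $M_0$ such that \emph{every} $W\subseteq V(H)\setminus V(M_0)$ with $k\mid |W|$ and $|W|\le cn$ can be perfectly absorbed into $H[V(M_0)\cup W]$. Under the hypotheses of Theorem \ref{thm:next} this lemma is simply false, so the step you yourself flag as the ``main obstacle'' cannot be overcome. Take the parity construction: fix $A\subseteq V$ with $|A|\approx n/2$ and let $E(H)$ be all $k$-sets $e$ with $|e\cap A|$ even. Then $\delta_{k-1}(H)\approx n/2-k\ge n/k-\gamma n$, and every independent set has size at most about $n/2+k$, so $H$ is not $5k\gamma$-extremal. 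Yet for \emph{any} matching $M_0$ the set $V(M_0)$ meets $A$ in an even number of vertices, so choosing $W$ with $k\mid |W|$ and $|W\cap A|$ odd makes a perfect matching of $H[V(M_0)\cup W]$ impossible, since every edge meets $A$ evenly. So no amount of clever counting of $\Omega(n^k)$ absorbers can rescue the lemma at codegree $\sim n/k$; parity-type barriers survive non-extremality. This is exactly why the paper uses the R\"odl--Ruci\'nski--Szemer\'edi notion of $S$-absorbing edges for $(k+1)$-sets $S$, where each absorption replaces one matching edge by two and \emph{releases one vertex}: this lossy absorption needs only $\delta_{k-1}(H)\ge cn$ (no non-extremality at all) and is immune to divisibility obstructions, which is what makes the near-perfect (rather than perfect) matching problem tractable at this degree level.

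The second half of your plan also has a gap. Your greedy justification --- ``an uncovered set of size $>cn'$ cannot be independent unless $H$ is $5k\gamma$-extremal'' --- is incorrect: non-extremality only forbids independent sets of size at least $(1-5k\gamma)\frac{k-1}{k}n$, so the uncovered (independent) set left by a maximum matching could a priori have linear, even $\Theta(n)$, size without contradicting either hypothesis. The fractional-matching route fares no better as stated: with $\delta_{k-1}(H)\ge n/k-\gamma n$ the generic conversion only yields a matching missing $\Theta(\gamma n)$ vertices, which exceeds the absorption capacity in your own hierarchy $c\ll\gamma$. The paper's Lemma \ref{lem:alm_mat} is the missing ingredient: starting from a maximum matching it uses the link degrees of the uncovered independent set to find a large set $D$ of matched vertices with high degree into the uncovered part, notes $|e_i\cap D|\le 1$ for each matching edge, and then uses non-extremality to find an edge inside the $(k-1)|D|$ ``partner'' vertices, producing an augmentation; this drives the leftover down to a \emph{constant} ($\le k^2/\gamma$) number of vertices, after which only constantly many lossy absorptions are needed. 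In short, both of your key lemmas need to be replaced by the weaker-absorption/stronger-almost-cover pair that the paper actually uses.
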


\begin{theorem}[Extremal case]\label{thm:ext}
For any integer $k\ge 3$, there exist an $\e>0$ and an integer $n_1$ such that the following holds. Let $n\ge n_1$ be an integer not divisible by $k$ and let $H$ be an $n$-vertex $k$-graph with $\delta_{k-1}(H)\ge \lfloor\frac{n}{k}\rfloor$. If $H$ is $\e$-extremal, then $H$ contains a near perfect matching.
\end{theorem}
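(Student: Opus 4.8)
\textbf{Proof proposal for Theorem~\ref{thm:ext} (Extremal case).}

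The plan is to exploit the structure forced by $\e$-extremality directly. So suppose $H$ is $\e$-extremal, witnessed by an independent set $B$ with $|B|\ge (1-\e)\frac{k-1}{k}n$, and set $A:=V(H)\setminus B$, so $|A|\le \frac{n}{k}+\e\frac{k-1}{k}n$, which is only slightly larger than $\frac{n}{k}$. Since $B$ is independent, every edge of $H$ uses at least one vertex of $A$; hence a matching of size $\lfloor n/k\rfloor$ must use at least $\lfloor n/k\rfloor$ vertices of $A$, and we will in fact build one that uses \emph{exactly} one vertex of $A$ in all but a bounded number of edges. Concretely, I would first move vertices between $A$ and $B$ to clean up the partition: reassign to $A$ any vertex of $B$ lying in ``too many'' edges within $B\cup\{\text{that vertex}\}$, and reassign to $B$ any vertex of $A$ with small ``link'' into $B$; after a constant number of such swaps we reach a partition $A',B'$ with $B'$ still almost independent, $|A'|$ still close to $n/k$, and every vertex of $A'$ contained in many edges of the form $\{a\}\cup S$ with $S\in\binom{B'}{k-1}$.

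Next I would peel off the ``defect'' part of $A'$. Let $A_0\subseteq A'$ be the set of vertices $a$ for which we cannot guarantee an edge $\{a\}\cup S$, $S\subseteq B'$; by the codegree condition $\delta_{k-1}(H)\ge \lfloor n/k\rfloor$ and a counting argument, $|A_0|$ is bounded by a constant (or at worst $o(n)$, which is already enough). For the vertices of $A_0$, and for the handful of ``bad'' edges inside $B'$, I would greedily extract a small matching $M_0$ covering $A_0$ together with at most $(k-1)|A_0|$ vertices of $B'$, using the codegree bound to find these edges one at a time. Crucially $M_0$ has bounded size, so it wastes only boundedly many vertices of $B'$.

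The heart of the argument is then a perfect-matching / $A$-absorbing step on what remains: we must match the remaining $|A'\setminus A_0|$ vertices of $A'$ into the remaining vertices of $B'$ so that all but $r$ vertices (where $r\equiv n\pmod k$, $1\le r\le k-1$) get covered, using exactly one $A'$-vertex per edge. This is precisely a perfect matching in the auxiliary $k$-partite-like structure, and since each $a\in A'\setminus A_0$ has codegree $\ge \lfloor n/k\rfloor$ into the $(k-1)$-subsets it meets, the relevant bipartite-type ``defect Hall'' condition holds with room to spare; I would verify it via a direct counting/deficiency argument (a vertex $a$ failing to be matchable would have to avoid $\ge |B'|-O(\text{const})$ many $(k-1)$-sets, contradicting $\delta_{k-1}(H)\ge\lfloor n/k\rfloor$ once $|B'|$ is large). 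Adjusting the leftover so that exactly $r$ vertices of $B'$ stay uncovered then yields a matching of size $\lfloor n/k\rfloor$ in $H$ together with $M_0$.

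\textbf{Main obstacle.} The delicate point is the interface between the bounded defect set $A_0$ (plus bad edges inside $B'$) and the main matching: we must ensure the greedy extraction of $M_0$ does not consume vertices of $B'$ that are ``essential'' for covering the rest of $A'$, and that after removing $M_0$ the codegree-into-$B'$ bounds used in the Hall-type step still hold. This is really a careful bookkeeping of how many vertices of $A'$ have codegree only \emph{barely} above $\lfloor n/k\rfloor$ versus comfortably above it, and choosing $\e$ small enough (relative to $k$) that $|A'|-|A_0|$ together with the at-most-$r$ slack fits exactly against $|B'|$ minus the $O(\text{const})$ vertices used by $M_0$. I expect this divisibility/exact-count reconciliation — rather than any single estimate — to be where the proof needs the most care, and it is exactly the reason the extremal case is separated out and handled with $\delta_{k-1}(H)\ge\lfloor n/k\rfloor$ rather than the weaker bound $\frac{n}{k}-\gamma n$ used in Theorem~\ref{thm:next}.
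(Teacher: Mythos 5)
Your outline reproduces the easy parts of the paper's argument (cleaning up the partition, covering the atypical vertices by a small greedy matching, bookkeeping the residue $r$), but the step you dismiss as a ``defect Hall'' verification is exactly the hard core of the extremal case, and the argument you sketch for it does not work. After the defect vertices are removed, the task is to match each remaining vertex $a$ of $A'$ with a $(k-1)$-subset of $B'$ forming an edge, so that these $(k-1)$-subsets are \emph{pairwise disjoint}. For $k\ge 3$ this is a perfect matching problem in a $k$-partite-type hypergraph, not a bipartite matching problem, and Hall/deficiency arguments do not apply: the obstruction to matching is a global disjointness constraint, not a local adjacency one. In particular, your claim that ``a vertex $a$ failing to be matchable would have to avoid $\ge |B'|-O(\text{const})$ many $(k-1)$-sets'' is false in spirit — each $a$ may be adjacent to a $(1-\alpha)$-fraction of all of $\binom{B'}{k-1}$ (which is what the codegree condition and extremality actually give you), and yet fail to be adjacent to any member of a \emph{fixed} family of $|A'|$ disjoint $(k-1)$-blocks, since $\alpha\binom{|B'|}{k-1}$ is vastly larger than $|A'|$. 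The paper resolves precisely this point by splitting $C$ into $k-1$ equal parts and invoking Pikhurko's theorem on perfect matchings in $k$-partite $k$-graphs (Theorem~\ref{thm:pik}), using the two complementary degree bounds $\delta_{\{1\}}(H')\ge(1-\a c_k)m^{k-1}$ and $\delta_{[k]\setminus\{1\}}(H')\ge(1-2k\e)m$; some such nontrivial input (or an absorption/random-split argument with concentration) is indispensable, and your proposal contains no substitute for it.

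A secondary inaccuracy: you assert the final matching uses exactly one $A$-vertex per edge ``in all but a bounded number of edges,'' but $|A|$ can exceed $\lfloor n/k\rfloor$ by up to roughly $\e\frac{k-1}{k}n$, i.e.\ by a linear amount, so linearly many edges with two or more $A$-vertices may be needed; the paper's Step 2 builds a matching $M_3$ of up to $2\a n$ such edges to drive the quantity $|A_3|-\frac{n_3-r}{k}$ down to $0$ or $1$ before the $k$-partite step. This is a fixable bookkeeping point, but as written your count does not reconcile.
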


Theorem \ref{thm:main} follows from Theorem \ref{thm:next} and Theorem \ref{thm:ext} immediately.

We prove Theorem \ref{thm:next} by the absorbing method, initiated  by R\"odl, Ruci\'nski and Szemer\'edi \cite{RRS06}. Given a set $S$ of $k+1$ vertices, we call an edge $e\in E(H)$ disjoint from $S$ \emph{$S$-absorbing} if there are two disjoint edges $e_1$ and $e_2$ in $E(H)$ such that $|e_1\cap S|=k-1$, $|e_1\cap e|=1$, $|e_2\cap S|=2$, and $|e_2\cap e|=k-2$. 
Note that this is not the absorbing in the usual sense because $e_1\cup e_2$ misses one vertex of $S\cup e$.
Let us explain how such absorbing works. Let $S$ be a $(k+1)$-set and $M$ be a matching, where $V(M)\cap S=\emptyset$, which contains an $S$-absorbing edge $e$. Then $M$ can ``absorb'' $S$ by replacing $e$ in $M$ by $e_1$ and $e_2$ (one vertex of $e$ becomes uncovered).
The following absorbing lemma was proved in \cite[Fact 2.3]{RRS09} with the conclusion that \emph{the number of $S$-absorbing edges in $M$ is at least $k-2$}. However, its proof shows that $k-2$ can be replaced by any constant. Note that we do not require that $k$ does not divide $n$ in Lemma \ref{lem:abs} and Lemma \ref{lem:alm_mat}.

\begin{lemma}\cite[Absorbing lemma]{RRS09}\label{lem:abs}
For all $c, \r>0$ there exist $C>0$ and $n_2$ such that if $H$ is a $k$-graph with $n\ge n_2$ vertices and $\delta_{k-1}(H)\ge c n$, then there exists a matching $M'$ in $H$ of size $|M'|\le C \log n$ and such that for every $(k+1)$-tuple $S$ of vertices of $H$, the number of $S$-absorbing edges in $M'$ is at least $k/\r$.
\end{lemma}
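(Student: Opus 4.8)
The plan is to follow the standard two-step route for absorbing lemmas of this type: first a counting estimate showing that \emph{every} $(k+1)$-set has polynomially many absorbing edges, and then a probabilistic selection.

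\textbf{Step 1 (counting).} I would first establish that there is a constant $\beta=\beta(c,k)>0$ such that for every $(k+1)$-set $S\subseteq V(H)$ the number of $S$-absorbing edges of $H$ is at least $\beta n^k$. Fix $S$ and split it as $S=T\cup\{a,b\}$ with $|T|=k-1$. Build a configuration greedily: choose a $(k-2)$-set $W$, disjoint from $S$, with $\{a,b\}\cup W\in E(H)$; applying $\delta_{k-1}(H)\ge cn$ to the $(k-1)$-sets $\{a,b\}\cup Y$ over all $(k-3)$-sets $Y$ with $Y\cap S=\emptyset$ yields at least $\beta_1 n^{k-2}$ admissible $W$. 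Next choose $v\notin S\cup W$ with $T\cup\{v\}\in E(H)$ (at least $cn-2k$ choices), and then $w\notin S\cup W\cup\{v\}$ with $W\cup\{v,w\}\in E(H)$ (at least $cn-k-1$ choices, applying the codegree bound to the $(k-1)$-set $W\cup\{v\}$). Put $e_1:=T\cup\{v\}$, $e_2:=\{a,b\}\cup W$ and $e:=W\cup\{v,w\}$. A direct check shows that $e_1,e_2$ are disjoint edges with $|e_1\cap S|=k-1$, $|e_2\cap S|=2$, $|e_1\cap e|=1$, $|e_2\cap e|=k-2$ and $e\cap S=\emptyset$, so $e$ is $S$-absorbing. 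Since a given edge $e$ arises from at most $k(k-1)$ triples $(W,v,w)$ (the positions of $v$ and $w$ inside $e$), the number of $S$-absorbing edges is at least $\beta_1 n^{k-2}(cn-2k)(cn-k-1)/(k(k-1))\ge \beta n^k$ for $n$ large.

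\textbf{Step 2 (random selection).} Let $\mathcal{A}_S$ denote the set of $S$-absorbing edges, so $|\mathcal{A}_S|\ge \beta n^k$ for all $S$. I would take a random subfamily $\mathcal{F}\subseteq E(H)$, including each edge independently with probability $p:=C''(\log n)/n^k$, where $C''=C''(\beta,k)$ is a large constant to be chosen. Then $\mathbb{E}|\mathcal{F}|\le p n^k=C''\log n$; the expected number of intersecting pairs of edges inside $\mathcal{F}$ is at most $p^2\cdot k n^{2k-1}=k(C'')^2(\log n)^2/n=o(1)$; and for each fixed $S$ we have $\mathbb{E}|\mathcal{F}\cap\mathcal{A}_S|\ge p\beta n^k=\beta C''\log n$, whence a Chernoff bound gives $\Pr\big[\,|\mathcal{F}\cap\mathcal{A}_S|<\tfrac12\beta C''\log n\,\big]\le n^{-\beta C''/8}$. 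Choosing $C''>8(k+2)/\beta$ and taking a union bound over the at most $n^{k+1}$ sets $S$, together with Markov's inequality for the first two events, shows that with probability $1-o(1)$ the family $\mathcal{F}$ is a matching with $|\mathcal{F}|\le 2C''\log n$ and $|\mathcal{F}\cap\mathcal{A}_S|\ge\tfrac12\beta C''\log n\ge k/\gamma$ for every $S$ (the last inequality once $n$ is large). Any such $\mathcal{F}$ is the desired $M'$, with $C:=2C''$.

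The probabilistic step is entirely routine; I expect the main obstacle to be the bookkeeping in Step~1 — verifying that the greedily chosen $(e,e_1,e_2)$ always realizes the exact (and slightly unusual) intersection pattern in the definition of an $S$-absorbing edge, and tracking the polynomial count cleanly through the iterated use of the codegree condition. I also note that improving the conclusion from ``$\ge k-2$'' (as in \cite{RRS09}) to ``$\ge k/\gamma$'' costs nothing: it only requires taking $C''$, and hence $C$, large enough in Step~2.
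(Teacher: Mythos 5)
Your argument is correct. A point of comparison worth noting: the paper itself does not prove Lemma~\ref{lem:abs} at all --- it quotes it from \cite{RRS09} (Fact~2.3 there) and merely remarks that the constant $k-2$ in that statement can be replaced by any constant such as $k/\gamma$, since the original proof allows it. Your two-step argument (first, every $(k+1)$-set $S$ has at least $\beta n^k$ absorbing edges, obtained by three applications of the codegree bound to $\{a,b\}\cup Y$, to $T$, and to $W\cup\{v\}$, with the multiplicity factor $k(k-1)$ handled correctly; second, independent random selection of edges with probability $\Theta(\log n/n^k)$ plus Chernoff and a union bound over the at most $n^{k+1}$ sets $S$) is precisely the standard absorbing-lemma route and essentially reconstructs the omitted \cite{RRS09} proof rather than taking a different one; in particular it makes transparent why the improvement from $k-2$ to $k/\gamma$ is free. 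The intersection-pattern check in Step~1 is sound: $e\cap S=\emptyset$, $e_1\cap e=\{v\}$, $e_2\cap e=W$, and $e_1\cup e_2\subseteq S\cup e$ as the absorption mechanism requires. One cosmetic slip: Markov's inequality only yields $\Pr\bigl[|\mathcal{F}|>2C''\log n\bigr]\le 1/2$, not $1-o(1)$; this is harmless since positive probability suffices for existence, and if you want the stronger statement, observe that $|\mathcal{F}|$ is a sum of independent indicators, so Chernoff applies to it as well.
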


We also need the following lemma, which provides a matching that covers all but a constant number of vertices when $H$ is not extremal. 

\begin{lemma}[Almost perfect matching]\label{lem:alm_mat}
For any integer $k\ge 3$ and constant $\gamma>0$ the following holds. Let $H$ be an $n$-vertex $k$-graph such that $n$ is sufficiently large and $\delta_{k-1}(H)\ge \frac{n}{k}-\r n$. If $H$ is not $2k\r$-extremal, then $H$ contains a matching that covers all but at most $k^2/\r$ vertices.
\end{lemma}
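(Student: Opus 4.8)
The plan is to run a standard greedy/extremal argument: take a maximum matching $M$ in $H$, let $U = V(H) \setminus V(M)$ be the set of uncovered vertices, and suppose for contradiction that $|U| > k^2/\gamma$. The idea is that $M$ being maximal forces every $(k-1)$-subset of $U$ to have all its neighbors inside $V(M)$, and in fact a counting argument over all the edges of $M$ shows that if $U$ is large then many edges $e \in M$ must be ``wasteful'' — contributing few neighbors of $(k-1)$-sets in $U$ — and this will let us either find an augmenting configuration (contradicting maximality) or else extract a large independent set, contradicting the non-extremality hypothesis. Concretely, fix a $(k-1)$-set $T \subseteq U$; by maximality of $M$ every edge of $H$ containing $T$ meets $V(M)$, so $\deg_H(T) \ge \frac{n}{k} - \gamma n$ neighbors of $T$ all lie in $V(M)$, which has size $n - |U|$.

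The key step is the following switching/absorbing-free rerouting: if for some edge $e = \{v_1, \dots, v_k\} \in M$ there are $k$ pairwise disjoint $(k-1)$-sets $T_1, \dots, T_k \subseteq U$ with $T_i \cup \{v_i\} \in E(H)$ for each $i$, then we could replace $e$ by the $k$ edges $T_1 \cup \{v_1\}, \dots, T_k \cup \{v_k\}$ — wait, those are not disjoint from the rest of $U$ used, so more carefully: if some edge $e \in M$ has the property that for two disjoint $(k-1)$-sets $T, T' \subseteq U$ and two distinct vertices $u, u' \in e$ we have $T \cup \{u\}, T' \cup \{u'\} \in E(H)$, then deleting $e$ from $M$ and adding these two edges increases $|M|$, a contradiction. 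So for every $e \in M$, the set of vertices $u \in e$ that extend into $U$ (i.e. lie in some edge with a $(k-1)$-set from $U$) is ``small'' in the sense that one cannot pick two such $u, u'$ with disjoint witnessing $(k-1)$-sets; since $|U| > k^2/\gamma$ is large, for each $e$ at most one vertex $u_e \in e$ can have $\ge$ a controlled number of neighboring $(k-1)$-sets in $U$, and the rest of $e$ sees $(k-1)$-sets of $U$ only within a bounded ``core'' of $U$ of size $O(k/\gamma)$.

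Putting this together: let $U' \subseteq U$ be $U$ minus this bounded core, so $|U'| \ge |U| - k^2/\gamma > 0$. Every $(k-1)$-set $T \subseteq U'$ has all $\deg_H(T) \ge \frac{n}{k} - \gamma n$ of its neighbors among the $\le |M| \le n/k$ vertices $\{u_e : e \in M\}$ (one per edge of $M$). Counting pairs $(T, v)$ with $v \in N(T)$, $v = u_e$: each such $v$ lies in at most $\binom{|U'|}{k-1}$ sets $T$, but more usefully, averaging shows $U'$ itself is nearly independent — each $(k-1)$-subset of $U'$ has essentially no edge of $H$ inside $U' \cup (U' \text{-neighbors})$, and a cleaning step removes the $\gamma n$-fraction of ``bad'' vertices to leave an independent set of size at least $(1 - 2k\gamma)\frac{k-1}{k}n$, contradicting that $H$ is not $2k\gamma$-extremal. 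The main obstacle I expect is making the bounded-core argument precise: showing rigorously that the obstruction to augmenting $M$ forces all but $O(k/\gamma)$ vertices of $U$ to behave uniformly with respect to each $e \in M$, so that the neighborhoods of $(k-1)$-sets in $U'$ collapse onto a set of size $\le |M|$, and then carefully tracking constants so the final independent set beats the $2k\gamma$ threshold rather than just $5k\gamma$.
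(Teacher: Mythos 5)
Your opening moves coincide with the paper's: take a maximum matching $M$, observe that the uncovered set $U$ is independent, and note that two distinct vertices of one matching edge having \emph{disjoint} $(k-1)$-set witnesses in $U$ would let you augment $M$. After that, however, there are genuine gaps. First, ``has at least a controlled number of neighboring $(k-1)$-sets in $U$'' is the wrong measure: a vertex can neighbor $\binom{|U|-1}{k-2}$ such sets all sharing a single vertex of $U$, which yields no disjoint witnesses and hence no augmentation. What matters is having many \emph{pairwise disjoint} witnesses, and the paper arranges this by first partitioning $U$ into disjoint $(k-1)$-sets $A_1,\dots,A_t$ with $t>k/\gamma$ and defining $D$ as the set of covered vertices adjacent to at least $k$ of the $A_i$; this makes $|e\cap D|\le 1$ immediate and makes the degree count clean ($\sum_i \deg(A_i)\le |D|\,t+kn$, giving $|D|\ge(\tfrac1k-2\gamma)n$). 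Second, your single ``bounded core of size $O(k/\gamma)$'' does not exist: the failure of the two-vertex augmentation only gives, for each edge $e\in M$ \emph{separately}, a small hitting set for the witnesses of its non-distinguished vertices, and the union of these per-edge cores over the up to $n/k$ edges of $M$ can have size linear in $n$. So deleting a set of size $k^2/\gamma$ from $U$ does not force all neighbors of $(k-1)$-sets of $U'$ to land on one designated vertex per matching edge.

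Third, and most importantly, the endgame targets the wrong set. To contradict non-$2k\gamma$-extremality you need an independent set of size $(1-2k\gamma)\frac{k-1}{k}n$, which is linear in $n$; but $U'$ is a subset of $U$, for which you only assumed $|U|>k^2/\gamma$ (and $U$ is already exactly independent), so no ``cleaning'' of $U'$ can produce such a set. In the paper the relevant set is $V_D\setminus D$: the $(k-1)|D|\ge(1-2k\gamma)\frac{k-1}{k}n$ covered vertices lying in matching edges that meet $D$. Moreover one does not show this set is independent; rather, non-extremality guarantees an edge $e_0$ inside it, and the key idea your sketch is missing is the multi-edge augmentation: $e_0$ meets $l\ge 2$ matching edges $e_{i_1},\dots,e_{i_l}$, each containing a vertex $v_{i_j}\in D$ that avoids $e_0$, and since each $v_{i_j}$ is adjacent to at least $k\ge l$ of the disjoint sets $A_i$ one can greedily pick distinct $A_{i_j}$ and replace $e_{i_1},\dots,e_{i_l}$ by the $l+1$ disjoint edges $e_0,\ \{v_{i_1}\}\cup A_{i_1},\dots,\{v_{i_l}\}\cup A_{i_l}$, contradicting maximality of $M$. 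Without this step (or a substitute for it) the proof does not close.
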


Now let us compare our proof with the proof in \cite{RRS09}, which showed that $\delta_{k-1}(H)\ge \frac{n}{k}+O(\log n)$ guarantees a near perfect matching. In \cite{RRS09}, the authors first build an absorbing matching of size $C\log n$ and then apply Theorem \ref{fact:RRS09} in the remaining $k$-graph. Finally, they absorb the leftover vertices and get the near perfect matching.
In our proof, instead of Theorem \ref{fact:RRS09}, we apply Lemma \ref{lem:alm_mat} after building the absorbing matching.
Lemma \ref{lem:alm_mat} only requires a weaker degree condition $\delta_{k-1}(H)\ge \frac{n}{k}-\r n$ and the condition that $H$ is not extremal. We then handle the extremal case separately.

\section{Proof of Theorem \ref{thm:next}}

In this section we prove Theorem \ref{thm:next} with the help of Lemma \ref{lem:abs} and Lemma \ref{lem:alm_mat}.

\begin{proof}[Proof of Lemma \ref{lem:alm_mat}]
Let $M=\{e_1, e_2, \dots, e_m\}$ be a maximum matching of size $m$ in $H$. Let $V'$ be the set of vertices covered by $M$ and let $U$ be the set of vertices which are not covered by $M$. We assume that $H$ is not $2k\r$-extremal and $|U|> k^2/\r$. Note that $U$ is an independent set by the maximality of $M$. We arbitrarily partition all but at most $k-2$ vertices of $U$ as disjoint $(k-1)$-sets $A_1, \dots, A_t$ where $t=\lfloor \frac{|U|}{k-1}\rfloor>\frac{k}{\r}$.

Let $D$ be the set of vertices $v\in V'$ such that $\{v\}\cup A_i\in E(H)$ for at least $k$ sets $A_i$, $i\in [t]$. 
We claim that $|e_i\cap D|\le 1$ for any $i\in [m]$. Otherwise, assume that $x, y\in e_i\cap D$. By the definition of $D$, we can  pick $A_i, A_j$ for some distinct $i, j\in [t]$ such that $\{x\}\cup A_i\in E(H)$ and $\{y\}\cup A_j\in E(H)$. We obtain a matching of size $m+1$ by replacing $e_i$ in $M$ by $\{x\}\cup A_i$ and $\{y\}\cup A_j$, contradicting the maximality of $M$.

Next we show that $|D|\ge (\frac 1k -2\r)n$. By the minimum degree condition, we have
\[
t\left(\frac 1k -\r \right)n\le \sum_{i=1}^t \deg(A_i)\le |D| t +n\cdot k,
\]
where we use the fact that $U$ is an independent set.
So we get
\[
|D|\ge \left(\frac 1k -\r \right)n - \frac{n k}{t} > \left(\frac 1k -2\r \right)n,
\]
where we use $t>k/\r$.

Let $V_D:=\bigcup\{e_i, e_i\cap D\neq \emptyset\}$. Note that $|V_D\setminus D|=(k-1)|D|\ge (k-1)(\frac 1k -2\r)n$. Since $H$ is not $2k \r$-extremal, $H[V_D\setminus D]$ contains at least one edge, denoted by $e_0$.
We assume that $e_0$ intersects $e_{i_1}, \dots, e_{i_l}$ in $M$ for some $2\le l\le k$. Suppose $\{v_{i_j}\}= e_{i_j}\cap D$ for all $j\in [l]$. By the definition of $D$, we can greedily pick $A_{i_1}, \dots, A_{i_l}$ such that  $\{v_{i_j}\}\cup A_{i_j} \in E(H)$ for all $j\in [l]$. Let $M''$ be the matching obtained from replacing the edges $e_{i_1}, \dots, e_{i_l}$ by $e_0$ and $\{v_{i_j}\}\cup A_{i_j}$ for $j\in [l]$. Thus, $M''$ has $m+1$ edges, contradicting the maximality of $M$.
\end{proof}

Now we prove Theorem \ref{thm:next}.

\begin{proof}[Proof of Theorem \ref{thm:next}]
Suppose $H$ is a $k$-graph on $n\notin k\mathbb{N}$ vertices with $\delta_{k-1}(H)\ge n/k - \r n$ and $H$ is not $5k\r$-extremal. In particular, $\r<\frac{1}{5k}$. Since $\delta_{k-1}(H)\ge \frac{n}{2k}$, we first apply Lemma \ref{lem:abs} on $H$ with $c=\frac{1}{2k}$ and find the absorbing matching $M'$ of size at most $C\log n$ such that for every set $S$ of $k+1$ vertices of $H$, the number of $S$-absorbing edges in $M'$ is at least $k/\r$.

Let $H'=H[V(H)\setminus V(M')]$ and $n'=|V(H')|$. Note that $\delta_{k-1}(H')\ge \delta_{k-1}(H) - k C\log n > (\frac1{k} - 2\r) n'$. If $H'$ is $4k\r$-extremal, namely, $V(H')$ contains an independent set $B$ of order at least $(1 -4k\r)\frac{k-1}{k}n'$, then since
\[
(1 -4k\r)\frac{k-1}{k}n' \ge (1 -5k\r)\frac{k-1}{k}n,
\]
we get that $H$ is $5k\r$-extremal, a contradiction. Thus, $H'$ is not $4k\r$-extremal and we can apply Lemma \ref{lem:alm_mat} on $H'$ with parameter $2\r$ and get a matching $M''$ in $H'$ that covers all but at most $k^2/(2\r)$ vertices. 
Since for every $(k+1)$-tuple $S$ in $V(H)$, the number of $S$-absorbing edges in $M'$ is at least $k/\r$, we can repeatedly absorb the leftover vertices (at most $k/(2\r)$ times, each time the number of leftover vertices is reduced by $k$) until the number of leftover vertices is at most $k$ (strictly less than $k$ by the assumption). Let $\tilde{M}$ denote the absorbing matching after the absorption. Then $\tilde{M}\cup M''$ is the desired near perfect matching in $H$.
\end{proof}

\section{Proof of Theorem \ref{thm:ext}}

We prove Theorem \ref{thm:ext} in this section. 
We use the following result of Pikhurko \cite{Pik}, stated here in a less general form.

\begin{theorem}\cite[Theorem 3]{Pik}\label{thm:pik}
Let $H$ be a $k$-partite $k$-graph with $k$-partition $V(H)=V_1\cup V_2\cup \cdots \cup V_k$ such that $|V_i|=m$ for all $i\in [k]$. Let $\delta_{\{1\}}(H)=\min \{ |N(v_1)|: v_1\in V_1 \}$ and
\[
\delta_{[k]\setminus \{1\}}(H) = \min\{ |N(v_2,\dots, v_k)|: v_i\in V_i \text{ for every }2\le i\le k \}.
\]
For sufficiently large integer $m$, if
\[
\delta_{\{1\}}(H) m + \delta_{[k]\setminus\{1\}}(H) m^{k-1} \ge \frac32 m^k,
\]
then $H$ contains a perfect matching.
\end{theorem}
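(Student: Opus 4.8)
The plan is to turn the problem into an ordinary bipartite matching problem by fixing a well-chosen ``template'' of pairwise disjoint $(k-1)$-tuples covering $V_2\cup\cdots\cup V_k$, and then to feed the two halves of the degree hypothesis into the two sides of the bipartite minimum-degree condition. Write $a:=\delta_{\{1\}}(H)/m^{k-1}$ and $b:=\delta_{[k]\setminus\{1\}}(H)/m$, both in $[0,1]$, so the hypothesis reads $a+b\ge\tfrac32$; in particular $a,b>\tfrac12$. Since $H$ is $k$-partite, the neighbourhood $N(v_1)$ of a vertex $v_1\in V_1$ is (identified with) a subset of $V_2\times\cdots\times V_k$ of size at least $am^{k-1}$, while for $(v_2,\dots,v_k)\in V_2\times\cdots\times V_k$ the codegree neighbourhood is a subset of $V_1$ of size at least $bm$.

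First I would choose a random template. Let $\sigma_3,\dots,\sigma_k$ be independent uniformly random bijections $\sigma_i\colon V_2\to V_i$, and put
\[
\mathcal L=\bigl\{\,\{v,\sigma_3(v),\dots,\sigma_k(v)\}:v\in V_2\,\bigr\}.
\]
This is a family of $m$ pairwise disjoint $(k-1)$-tuples, one meeting each of $V_2,\dots,V_k$, with $\bigcup_{w\in\mathcal L}w=V_2\cup\cdots\cup V_k$ (when $k=2$ the template is just the singletons $\{v\}$, $v\in V_2$, and no randomness is needed). For $v_1\in V_1$ set $X_{v_1}:=|\{w\in\mathcal L:\{v_1\}\cup w\in E(H)\}|$. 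Grouping the tuples of $N(v_1)$ by their $V_2$-coordinate and using that $(\sigma_3(v),\dots,\sigma_k(v))$ is uniform on $V_3\times\cdots\times V_k$ for each fixed $v\in V_2$ gives $\mathbb E\,X_{v_1}=|N(v_1)|/m^{k-2}\ge am$. Since composing any single $\sigma_i$ with a transposition changes $X_{v_1}$ by at most $2$, the standard concentration inequality for functions of a tuple of independent random permutations yields $\Pr[X_{v_1}<(a-\tfrac18)m]\le e^{-\Omega_k(m)}$, and a union bound over the $m$ vertices $v_1$ shows that for $m$ large we may fix a template $\mathcal L$ with $X_{v_1}\ge(a-\tfrac18)m$ for every $v_1\in V_1$.

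Now consider the balanced bipartite graph $B$ with vertex classes $V_1$ and $\mathcal L$, where $v_1\sim w$ iff $\{v_1\}\cup w\in E(H)$. Every $v_1\in V_1$ has $B$-degree $X_{v_1}\ge(a-\tfrac18)m$, and every $w\in\mathcal L$ has $B$-degree $\deg_H(w)\ge bm$. I claim $B$ has a perfect matching: otherwise Hall's condition fails for $V_1$, so some nonempty $S\subseteq V_1$ has $|N_B(S)|\le|S|-1$; each vertex of $S$ sends all its edges into $N_B(S)$, so $|N_B(S)|\ge(a-\tfrac18)m$ and hence $|S|\ge(a-\tfrac18)m+1$, while $\mathcal L\setminus N_B(S)\ne\emptyset$ (because $|N_B(S)|<|S|\le m$) and each of its vertices sends all its edges into $V_1\setminus S$, hence has $B$-degree at most $m-|S|\le(1-a+\tfrac18)m-1<bm$, where the last inequality uses $a+b\ge\tfrac32$ --- a contradiction. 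Finally, a perfect matching of $B$ selects for each $v_1\in V_1$ a distinct $w\in\mathcal L$ with $\{v_1\}\cup w\in E(H)$; since distinct members of $\mathcal L$ are disjoint, these $m$ edges form a perfect matching of $H$.

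The only step that is more than bookkeeping is the concentration estimate needed to make $X_{v_1}\ge(a-\tfrac18)m$ hold simultaneously for all $m$ vertices $v_1$ --- this is exactly where ``$m$ sufficiently large'' is used. Everything else is the expectation computation $\mathbb E\,X_{v_1}\ge am$ and the elementary fact that a balanced bipartite graph whose two side minimum degrees sum to more than $m$ has a perfect matching. If one wishes to avoid probabilistic language one could instead try to construct a good template directly, but the union bound makes the random route the cleanest; the constant $\tfrac18$ is immaterial, as any positive slack below $a+b-1\ge\tfrac12$ works.
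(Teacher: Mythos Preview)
The paper does not prove this statement at all: Theorem~\ref{thm:pik} is quoted from Pikhurko~\cite{Pik} and used as a black box in the proof of Theorem~\ref{thm:ext}, so there is no in-paper argument to compare against.

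That said, your proof is correct, and it is in fact essentially Pikhurko's own argument. The reduction is exactly the right one: a uniformly random system of bijections $\sigma_3,\dots,\sigma_k$ produces a template $\mathcal L$ of $m$ disjoint transversal $(k-1)$-tuples; the expectation $\mathbb E\,X_{v_1}=|N(v_1)|/m^{k-2}\ge am$ is computed correctly by conditioning on the $V_2$-coordinate; the Lipschitz constant $2$ under a transposition in any single $\sigma_i$ is right, and the McDiarmid-type inequality for independent random permutations then gives the required $e^{-\Omega_k(m)}$ tail so that a union bound over the $m$ vertices of $V_1$ succeeds for large $m$. Finally, the Hall argument is the standard fact that a balanced bipartite graph on $m+m$ vertices with side minimum degrees summing to more than $m$ has a perfect matching, and $(a-\tfrac18)+b\ge\tfrac32-\tfrac18>1$ supplies exactly that. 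Two cosmetic remarks: from $a+b\ge\tfrac32$ and $a,b\le1$ you only get $a,b\ge\tfrac12$, not strict inequality, but you never use strictness; and for $k=2$ the template is deterministic and no concentration is needed, as you note.
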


\begin{proof}[Proof of Theorem \ref{thm:ext}]
Fix a sufficiently small $\e>0$. Suppose $n$ is sufficiently large and not divisible by $k$.
Let $H$ be a $k$-graph on $n$ vertices satisfying $\delta_{k-1}(H)\ge \lfloor\frac{n}{k}\rfloor$. Assume that $H$ is $\e$-extremal, namely, there is an independent set $S\subseteq V(H)$ with $|S| \ge (1-\e)\frac{k-1}k n$. 

We partition $V(H)$ as follows. Let $\a=\e^{1/2}$. Let $C$ be a maximum independent set of $V(H)$. Define
\begin{equation}\label{eq:A}
A=\left\{x\in V\setminus C: \deg(x, C)\ge (1-\a) \binom{|C|}{k-1}\right\},
\end{equation}
and $B=V\setminus (A\cup C)$. We first observe the following bounds of $|A|, |B|, |C|$.

\begin{proposition}\label{clm:size}
$|A|\ge\left\lfloor\frac{n}k\right\rfloor-\a n$, $|B|\le \a n$, and $(1-{\e})\frac {(k-1)n}k\le |C|\le \lceil \frac {(k-1)n}k\rceil$.
\end{proposition}

\begin{proof}
The lower bound for $|C|$ follows from our hypothesis immediately. For any $S\subseteq C$ of order $k-1$, we have $N(S)\subseteq A\cup B$. 
By the minimum degree condition, we have 
\begin{equation}\label{eq:ab}
\left\lfloor\frac{n}k\right\rfloor \le |N(S)| \le |A|+|B| =n-|C| \le \frac nk + \e \frac {(k-1)n}k,
\end{equation}
which gives the upper bound for $|C|$.
By the definitions of $A$ and $B$, we have
\[
\left\lfloor\frac{n}k\right\rfloor \binom{|C|}{k-1} \le e((A\cup B)C^{k-1})\le (1-\a)\binom{|C|}{k-1} |B| + \binom{|C|}{k-1} |A|,
\]
where $e((A\cup B)C^{k-1})$ denotes the number of edges that contains $k-1$ vertices in $C$ and one vertex in $A\cup B$.
Thus, we get $\left\lfloor\frac{n}k\right\rfloor \le |A|+|B|-\a |B|$, which gives that $\a |B| \le |A|+|B| - \left\lfloor\frac{n}k\right\rfloor\le \e n$ by \eqref{eq:ab}.
So $|B|\le \a n$ and $|A|\ge \left\lfloor\frac{n}k\right\rfloor-|B|\ge \left\lfloor\frac{n}k\right\rfloor-\a n$. 
\end{proof}

We will build four disjoint matchings $M_1$, $M_2$, $M_3$, and $M_4$ in $H$, whose union gives the desired near perfect matching in $H$. Let $r\equiv n \mod k$ and $1\le r\le k-1$. Note that $\lfloor \frac{n}{k}\rfloor=\frac{n-r}{k}$. 
For $i\in [3]$, let $A_i=A\setminus V(\cup_{j\in [i]} M_j)$ and $C_i=C\setminus V(\cup_{j\in [i]} M_j)$ be the sets of uncovered vertices of $A$ and $C$, respectively. Let $n_i=|V(H)\setminus V(\cup_{j\in [i]} M_j)|$ and note that $n_i\equiv r \mod k$.

\bigskip
\noindent \emph{Step 1. Small matchings $M_1$ and $M_2$ covering $B$.}

We build the first matching $M_1$ on vertices of $B\cup C$ of size $t$ only if $t:= \lfloor\frac{n}{k}\rfloor - |A|>0$. Note that it is possible that $t\le 0$ -- in this case $M_1=\emptyset$. By Proposition \ref{clm:size}, we know that $t=\lfloor\frac{n}{k}\rfloor - |A|\le \a n$. 
Since $\delta_{k-1}(H)\ge \lfloor\frac{n}{k}\rfloor$ and by the definition of $t$, we have $\delta_{k-1}(H[B\cup C])\ge t$. Since $|C|\le \lceil \frac {(k-1)n}k\rceil$, we have $|B|= n - |C| - |A|\ge \lfloor\frac{n}{k}\rfloor - |A|=t$. We pick arbitrary $t$ disjoint $(k-1)$-sets from $C$. Since $C$ is an independent set, each of the $(k-1)$-sets has at least $t$ neighbors in $B$, so we can choose a matching $M_1$ of size $t$.

Next we build the second matching $M_2$ that covers all the vertices in $B\setminus V(M_1)$. 
For each $v\in B\setminus V(M_1)$, we pick $k-2$ arbitrary vertices from $C$ not covered by the existing matching, and an uncovered vertex $v\in V$ to complete an edge and add it to $M_2$. Since $\delta_{k-1}(H)\ge \lfloor\frac{n}{k}\rfloor$ and the number of vertices covered by the existing matching is at most $k |B|\le k\a n<\lfloor\frac{n}{k}\rfloor$, such an edge always exists.

\medskip
Our construction guarantees that each edge in $M_1\cup M_2$ contains at least one vertex from $B$ and thus $|M_1\cup M_2|\le |B|$. 
We claim that $|A_1|\ge \frac{n_1-r}{k}$ and $|A_2|\ge \frac{n_2-r}{k}$.
To see the bound for $|A_1|$, we separate two cases depending on $t$. When $t>0$, since $|M_1|=t$, we have
\[
|A_1| = \frac{n-r}{k} - t = \frac{n - r - k|M_1|}{k}=\frac{n_1 - r}{k}.
\]
Otherwise $t\le 0$, we have $n_1=n$ and $|A_1|=|A|\ge \frac{n - r}{k}=\frac{n_1 - r}{k}$. 
For the bound for $|A_2|$, since each edge of $M_2$ contains at most one vertex of $A$, we have
\[
|A_2| \ge |A_1| - |M_2| \ge \frac{n_1 - r}{k} - |M_2| = \frac{n_2 - r}{k}.
\]

Let $s:=|A_2| - \frac{n_2-r}{k}\ge 0$. Since $n_2=n - k|M_1\cup M_2|\ge n - k|B|\ge n - k\a n$ and $|C|\ge (1-{\e})\frac {(k-1)n}k$ (Proposition \ref{clm:size}), we get
\[
s\le n - |C| - \frac{n-k\a n -r}{k}\le \e\frac {(k-1)n}k + \a n + 1 \le 2\a n.
\]

\bigskip
\noindent \emph{Step 2. A small matching $M_3$.}

Starting with $M_3=\emptyset$, we will greedily add at most $2\a n$ edges to $M_3$ from $A_2\cup C_2$ until we have $|A_3| - \frac{n_3 -r}{k}\in \{0, 1\}$. 
Indeed, throughout the process, denote by $n'$ the number of uncovered vertices of $H$ and denote by $A', C'$ the set of uncovered vertices in $A, C$, respectively. Let $c=|A'| - \frac{n' -r}{k}$.
If $c\ge k-1$, then we arbitrarily pick $k-1$ vertices from $A'$ and a vertex from $A'\cup C'$ to form an edge. As a result, $|A'| - \frac{n' -r}{k}$ decreases by $k-1$ or $k-2$. 
If $c<k-1$, then we pick $c$ vertices from $A'$, $k-c-1$ vertices from $C'$, and form an edge with some vertex from $A'\cup C'$. In this case, $|A'| - \frac{n' -r}{k}$ decreases by $c$ or $c-1$. The iteration stops when $|A'| - \frac{n' -r}{k}$ becomes 0 or 1 after at most $\lceil\frac{s}{k-2}\rceil\le s\le 2\a n$ steps.
Note that we can always form an edge in each step because the number of covered vertices is at most $k|B|+k\cdot 2\a n\le 3k\a n<\delta_{k-1}(H)$. So we get a matching $M_3$ of at most $2\a n$ edges.

\bigskip
\noindent \emph{Step 3. The last matching $M_4$.}

Now we have two cases, $|A_3| - \frac{n_3 -r}{k}= 0$ or 1. In the first case, we will find a matching $M_4$ of size $|A_3|$ which leaves $r$ vertices in $C_3$. In the second case, we will find a matching $M_4$ of size $|A_3| - 1$ which leaves one vertex in $A_3$ and $r-1$ vertices in $C_3$. 
Note that in either case we are done since $M = M_1\cup M_2\cup M_3\cup M_4$ is a matching that covers all but $r$ vertices of $V(H)$. 

We define $A_3'$ and $C_3'$ as follows. If $|A_3| - \frac{n_3 -r}{k}= 0$, we let $A_3' = A_3$ and obtain $C_3'$ by deleting arbitrary $r$ vertices from $C_3$. Otherwise, we obtain $A_3'$ by deleting one arbitrary vertex from $A_3$ and obtain $C_3'$ by deleting $r-1$ arbitrary vertices from $C_3$. Note that in both cases, we have $|A_3'| - \frac{|A_3'| + |C_3'|}{k}=0$, which implies $|C_3'| = (k-1)|A_3'|$. Furthermore, we have
\[
|A_3'|\ge |A| - |M_1\cup M_2| - |M_3| - 1\ge \left \lfloor \frac{n}{k} \right\rfloor - \a n -\a n - 2\a n -1 \ge \left \lfloor \frac{n}{k} \right\rfloor - 5\a n,
\]
because $|M_1\cup M_2|\le |B|\le \a n$ and $|M_3|\le 2\a n$.

Let $m:=|A_3'|$. Next, we partition $C_3'$ arbitrarily into $k-1$ parts $C^1, C^2,\dots, C^{k-1}$ of the same size $m$. We want to apply Theorem \ref{thm:pik} on the $k$-partite $k$-graph $H':=H[A_3', C^1,\dots, C^{k-1}]$. Let us verify the assumptions. 
First, since $C_3'$ is independent, for any set of $k-1$ vertices $v_1,\dots, v_{k-1}$ such that $v_i\in C^i$ for $i\in [k-1]$, the number of its non-neighbors in $A\cup B$ is at most
\[
|A| + |B| - \left\lfloor \frac{n}{k} \right\rfloor \le \frac nk + \e \frac {(k-1)n}k - \left\lfloor \frac{n}{k} \right\rfloor \le \e n\le 2k\e m,
\]
where we use \eqref{eq:ab} and the last inequality follows from $m=|A_3'|\ge \lfloor \frac{n}{k} \rfloor - 5\a n>\frac{k-1}{k^2}n$.
So we have $\delta_{[k]\setminus\{1\}}(H')\ge m - 2k\e m= (1-2k\e)m$.
Next, by \eqref{eq:A}, for any $v\in A_3'$, we have
\[
\overline{\deg}_{H}(v, C)\le \a \binom{|C|}{k-1}\le \a \frac{|C|^{k-1}}{(k-1)!}\le \a \frac{\left(\frac{k-1}{k}n \right)^{k-1}}{(k-1)!}\le \a \frac{(km)^{k-1}}{(k-1)!}= \a c_k m^{k-1},
\]
where $c_k=\frac{k^{k-1}}{(k-1)!}$.
This implies that $\delta_{\{1\}}(H') \ge (1-\a c_k) m^{k-1}$. Thus, we have
\[
\delta_{\{1\}}(H') m + \delta_{[k]\setminus\{1\}}(H') m^{k-1} \ge (1- \a c_k) m^{k-1} m + (1-2k \e)m m^{k-1}>\frac32 m^k,
\]
since $\e$ is small enough. By Theorem \ref{thm:pik}, we find a perfect matching in $H'$, which gives the perfect matching $M_4$ on $H[A_3'\cup C_3']$.
\end{proof}

\section{Acknowledgement}

The author would like to thank his advisor, Yi Zhao, for suggesting this problem. The author also thanks Andrzej Ruci\'nski, Yi Zhao and the anonymous referee for invaluable comments on the manuscript.

\bibliographystyle{plain}
\bibliography{Jan2014}

\end{document}